\documentclass[11pt]{article}
\usepackage{verbatim} 
\usepackage{amsmath}
\usepackage{amssymb}
\usepackage{amsfonts}
\usepackage{amsthm}
\usepackage{cite}
\usepackage{graphicx} 
\usepackage{mathrsfs}
\usepackage{color}
\usepackage{enumitem}
\usepackage{latexsym}
\usepackage{enumerate}

\usepackage{pgf,tikz,pgfplots}  
\usetikzlibrary{arrows}

\newtheorem{theorem}{Theorem}
\newtheorem{corollary}[theorem]{Corollary}
\newtheorem{lemma}[theorem]{Lemma}
\newtheorem{proposition}[theorem]{Proposition}

\theoremstyle{definition}

\newtheorem{definition}[theorem]{Definition}
\newtheorem{question}[theorem]{Question}

\newcommand{\R}{\mathbb{R}}
\newcommand{\N}{\mathbb{N}}

\begin{document}   
     
\makeatletter
\@namedef{subjclassname@2010}{
  \textup{2010} Mathematics Subject Classification 03E15, 54H05 and 54F15}
\makeatother

\title{There is no compact metrizable space containing all continua as unique components}

\author{
Benjamin Vejnar\footnote{This work has been supported by Charles University Research Centre program No.UNCE/SCI/022.
}
\\ Department of Mathematical Analysis
\\ Faculty of Mathematics and Physics, Charles University, Czechia\\
\\ Email: vejnar@karlin.mff.cuni.cz
}

\maketitle

\begin{abstract}
We answer a question of Piotr Minc by proving that there is no compact metrizable space whose set of components contains a unique topological copy of every metrizable compactification of a ray (i.e. a half-open interval) with an arc (i.e. closed bounded interval) as the remainder.
To this end we use the concept of Borel reductions coming from Invariant descriptive set theory. It follows as a corollary that there is no compact metrizable space such that every continuum is homeomorphic to exactly one component of this space.
\end{abstract}

\section{Introduction}
By a continuum we mean a compact connected metrizable space. It can be easily observed that there is a compact metrizable space $X$ such that every continuum is homeomorphic to some component of $X$. Indeed, let us consider the Cantor set $C$, the Hilbert cube $Q$, the hyperspace of all subcontinua $\mathcal C(Q)$ with the Vietoris topology (which is known to be a compact metrizable space), and any continuous surjection $f\colon C\to \mathcal C(Q)$ (which exists by elementary properties of the Cantor set), and let 
\[X=\{(c, x)\in C\times Q\colon x\in f(c)\}\]
with the topology inhereted from the product space $C\times Q$.
To what extent this result can be generalised in a positive direction was the main scope of \cite{BartosBMPV}. One of the aims of this paper is to prove a negative counterpart, namely that there is no such a space $X$ in which all the components are pairwise non-homeomorphic.

For a continuum $K$ let us call a space $S$ a \emph{spiral over $K$} if $S$ is homeomorphic to a metrizable compactification of $[0,\infty)$ with $K$ as the remainder.
It is an old result of Waraszkiewicz that there are uncountably many spirals over a circle such that no one can be mapped onto any other by a continuous mapping \cite{Waraszkiewicz} (see \cite{PyrihVejnar} for a simple proof).
Awartani constructed an uncountable collection of compactifications of a ray with the same properties \cite{Awartani}.
Barto\v s, Marci\v na, Pyrih and the author proved that for every non-degenerate Peano continuum $K$ there is a family of size continuum of spirals over $K$ such that no one can be mapped continuously onto any other \cite{BartosMarcinaPyrihVejnar}.
On the other hand Illanes, Minc and Sturm proved that for every pair $Y, Z$ of spirals over the pseudo-arc, $Y$ can be mapped continuously onto $Z$ \cite{IllanesMincSturm}.
Recently, Minc proved that for every non-degenerate continuum $K$ there is a perfect set of spirals over $K$ such that no one is homeomorphic to any other \cite{Minc}.

Minc asked whether for a non-degenerate continuum $K$ there is compact metrizable space $X$ such that
(1) every spiral over $K$ is homeomorphic to a \emph{unique} component of $K$ and at the same time
(2) every component of $X$ is homeomorphic to a spiral over $K$
\cite[Question 2]{Minc}.
In the next section a negative answer to the question is presented. We prove that there is no compact metrizable space $X$ satisfying (1) with $K=[0,1]$. The proof works also with another choices of the space $K$, e.g. if $K$ is supposed to be a circle. 

\section{Main results}

A \emph{Polish space} is a completely metrizable separable space.
Let us denote by $\mathcal K(X)$ the \emph{hyperspace} of all compact subsets of a space $X$ with the \emph{Vietoris topology}. This is well known to be a Polish space if $X$ is Polish.
Recall that a continuous surjective mapping is called \emph{perfect} if it is closed and preimages of points are compact.
The following definition is suitable for our purposes.

\begin{definition}\label{compactifiable}
A collection $\mathcal C$ of metrizable continua is called \emph{uniquely compactifiable} if there exists a compact metrizable space $X$ such that for every $C\in\mathcal C$ there exists a unique component $D$ of $X$ which is homeomorphic to $C$.
\end{definition}

Let us focus on the fact that in the definition above we do not require that the components of $X$ are homeomorphic to elements of $\mathcal C$.

Let us recall some basic definitions from Invariant descriptive set theory. See \cite{Gao} for more details in this field
or \cite{Foreman} for a nice and short introduction to the theory of Borel reductions.
Let $X$, $Y$ be Polish spaces and $E, F$ equivalence relations on $X, Y$ respectively. We say that $E$ is \emph{Borel reducible} to $F$ if there is a Borel measurable mapping $\varphi\colon X\to Y$ such that $(a,b)\in E$ if and only if $(\varphi(a),\varphi(b))\in F$ for every $a,b\in X$.
An equivalence relation is called \emph{smooth} if it is Borel reducible to the equality equivalence relation on a Polish space.

We can relax the notion of unique compactifiability in the sense of the following definition.

\begin{definition}\label{polishable}
A collection $\mathcal C$ of compact metrizable spaces is called \emph{uniquely Polishable} if there exist Polish spaces $X, Y$ and a perfect mapping $\pi\colon X\to Y$ such that for every $C\in\mathcal C$ there exists unique $y\in Y$ for which $\pi^{-1}(y)$ is homeomorphic to $C$.
\end{definition}

\begin{lemma}\label{comppoli}
If $\mathcal C$ is a collection of continua which is uniquely compactifiable then $\mathcal C$ is uniquely Polishable.
\end{lemma}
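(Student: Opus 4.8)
The plan is to let $X$ be a compact metrizable space witnessing that $\mathcal C$ is uniquely compactifiable, and to take for $\pi$ the natural quotient map collapsing each connected component of $X$ to a single point. Writing $x\sim x'$ when $x$ and $x'$ lie in the same component, I set $Y=X/{\sim}$ with the quotient topology and let $\pi\colon X\to Y$ be the quotient map. By construction the fibres of $\pi$ are exactly the components of $X$, and $y\mapsto\pi^{-1}(y)$ is a bijection between the points of $Y$ and the components of $X$. It then remains to verify the three requirements of Definition \ref{polishable}: that $X$ and $Y$ are Polish, that $\pi$ is perfect, and that the unique-realization property passes from components of $X$ to fibres of $\pi$.

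The two routine parts come first. Since $X$ is compact metrizable it is Polish, and $\pi$ is a continuous surjection onto $Y$; as a continuous map out of a compact space into a Hausdorff space it is automatically closed, and its point-preimages, being components, are compact, so $\pi$ is perfect as soon as $Y$ is known to be Hausdorff. For the transfer of the realization property, observe that because $\pi$ identifies components with points of $Y$, a component $D$ of $X$ is homeomorphic to a given $C\in\mathcal C$ precisely when $D=\pi^{-1}(y)$ for the corresponding $y\in Y$; thus the existence of a \emph{unique} component homeomorphic to $C$ is literally the same assertion as the existence of a unique $y$ with $\pi^{-1}(y)$ homeomorphic to $C$.

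The one substantial step, and the place I expect the real work to lie, is showing that $Y$ is again compact metrizable, which also yields the Hausdorffness needed above and makes $Y$ Polish. This is the classical fact that the decomposition of a compact metric space into its connected components is upper semicontinuous with a compact metrizable (indeed zero-dimensional) decomposition space, which I would either cite or justify as follows. Upper semicontinuity amounts to $\pi$ being closed, which holds since $X$ is compact with closed fibres; hence for every open $U\subseteq X$ the saturation $U^{\ast}=\{y\in Y:\pi^{-1}(y)\subseteq U\}=Y\setminus\pi(X\setminus U)$ is open in $Y$. Fixing a countable base $\mathcal B$ of $X$ that is closed under finite unions, I would check that $\{U^{\ast}:U\in\mathcal B\}$ is a base for $Y$: given $y$ and an open neighbourhood $O$, the component $\pi^{-1}(y)$ is a compact subset of the open saturated set $\pi^{-1}(O)$, so it is covered by some $U\in\mathcal B$ with $U\subseteq\pi^{-1}(O)$, and then $y\in U^{\ast}\subseteq O$. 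This makes $Y$ second countable, while Hausdorffness follows because distinct components of the compact Hausdorff space $X$ are separated by a clopen (hence saturated) set, which projects to disjoint open sets in $Y$. The Urysohn metrization theorem now gives that $Y$ is compact metrizable, and the proof is complete.
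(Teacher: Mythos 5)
Your strategy is exactly the paper's: collapse each component of the witnessing compact space $X$ to a point, take $Y$ to be the decomposition space with quotient map $\pi$, observe that the fibres of $\pi$ are precisely the components so the unique-realization property transfers verbatim, and reduce everything to the classical fact that this decomposition space is compact metrizable. The paper simply cites that fact ("it is known that $Y$ is a compact metrizable space"); you attempt to prove it, and that is where the one genuine problem in your write-up lies.

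The flawed step is your justification for closedness of $\pi$: you assert that $\pi$ is closed ``since $X$ is compact with closed fibres.'' That implication is false in general, and as written your argument is also circular, since your only other route to closedness (continuous map from a compact space to a Hausdorff space) requires the Hausdorffness of $Y$ that you establish afterwards. For a counterexample to the asserted principle, let $X=\bigl(\{0\}\cup\{1/n:n\in\N\}\bigr)\times\{0,1\}$ and identify $(1/n,0)$ with $(1/n,1)$ for each $n$, keeping $(0,0)$ and $(0,1)$ as singleton classes: $X$ is compact metrizable and every class is compact, yet the saturation of the closed set $\{(0,0)\}\cup\{(1/n,0):n\in\N\}$ is not closed, so the quotient map is not closed and the quotient is not even Hausdorff. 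What makes the \emph{component} decomposition special is connectedness: in a compact Hausdorff space each component equals its quasicomponent, so a component can be enclosed in a clopen (hence saturated) set avoiding any disjoint closed saturated set; closedness of the fibres alone cannot substitute for this. Fortunately you invoke exactly this clopen-separation fact later, to prove Hausdorffness, and that argument nowhere uses closedness of $\pi$. So your proof is repaired by reordering: first prove $Y$ is Hausdorff via clopen separation of distinct components, then conclude $\pi$ is closed because it is a continuous map from a compact space onto a Hausdorff space, and then run your $U^{\ast}$-base and Urysohn arguments unchanged.
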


\begin{proof}
Since $\mathcal C$ is compactifiable, there is a witnessing compact space $X$ as in Definition \ref{compactifiable}.
Let us denote by $Y$ the quotient space of $X$ where all the components shrink to points. It is known that $Y$ is a compact metrizable space.
Let us denote as $\pi\colon X\to Y$ the corresponding quotient mapping.
Clearly $\pi$ is a perfect mapping and hence $\mathcal C$ is uniquely Polishable.
\end{proof}

We denote by $Q$ the Hilbert cube and by $H$ the homeomorphism equivalence relation on $\mathcal K(Q)$. It means that $(K,L)\in H$ if and only if $K, L\in\mathcal K(Q)$ and $K$ is homeomorphic to $L$. 

The following theorem contains one of the two main ingredients towards the solution of Minc' question.

\begin{theorem}\label{uniquePolIsSmooth}
Suppose that $\mathcal C\subseteq\mathcal K(Q)$ is uniquely Polishable. Then for every Polish space $T\subseteq \mathcal C$ the equivalence relation $H\restriction T$ is smooth.
\end{theorem}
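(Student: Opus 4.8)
The plan is to produce, directly from the witnessing data of unique Polishability, a Borel reduction of $H\restriction T$ to the equality relation on the Polish space $Y$. Let $X$, $Y$ and the perfect map $\pi\colon X\to Y$ be as in Definition \ref{polishable}. For $C\in T\subseteq\mathcal C$ there is, by assumption, a \emph{unique} $y\in Y$ with $\pi^{-1}(y)$ homeomorphic to $C$; I would define $\varphi\colon T\to Y$ by letting $\varphi(C)$ be this unique point. The reduction property is then immediate from uniqueness: if $C\cong C'$ then $\pi^{-1}(\varphi(C))\cong C\cong C'\cong\pi^{-1}(\varphi(C'))$, so $\varphi(C)$ and $\varphi(C')$ are both the unique point of $Y$ whose fibre is homeomorphic to $C$, whence $\varphi(C)=\varphi(C')$; conversely $\varphi(C)=\varphi(C')$ gives $C\cong\pi^{-1}(\varphi(C))\cong C'$. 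Thus $(C,C')\in H\restriction T$ iff $\varphi(C)=\varphi(C')$, and everything reduces to showing that $\varphi$ is Borel measurable.

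To analyse $\varphi$ I would study its graph $R=\{(C,y)\in T\times Y\colon \pi^{-1}(y)\cong C\}$, which by uniqueness is exactly the graph of the total, single-valued function $\varphi$. The main step is to show that $R$ is analytic. First I would fix a homeomorphic embedding of the Polish space $X$ onto a subset of the Hilbert cube $Q$ and regard each fibre $\pi^{-1}(y)$ as an element of $\mathcal K(Q)$; this does not change its homeomorphism type. I then claim that the fibre map $\Phi\colon Y\to\mathcal K(Q)$, $y\mapsto\pi^{-1}(y)$, is Borel. Indeed, $\pi$ is perfect, hence closed, so it carries closed sets to closed sets and therefore carries every $F_\sigma$ set to an $F_\sigma$ set. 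For a basic open $U\subseteq X$ the preimage $\{y\colon\pi^{-1}(y)\subseteq U\}$ is the complement of the closed set $\pi(X\setminus U)$, hence open, and $\{y\colon\pi^{-1}(y)\cap U\neq\emptyset\}=\pi(U)$ is $F_\sigma$ because $U$ is $F_\sigma$; since these two families generate the Vietoris topology, $\Phi$ is Borel. Next I would invoke the standard fact that the homeomorphism relation $H$ on $\mathcal K(Q)$ is analytic (a homeomorphism is coded by its graph, a compact subset of $Q\times Q$, the defining conditions are Borel, and $H$ is the projection of this Borel set). Then $R$ is the preimage of $H$ under the Borel map $(C,y)\mapsto(\Phi(y),C)$ of $T\times Y$ into $\mathcal K(Q)\times\mathcal K(Q)$, and is therefore analytic.

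Finally, because $\varphi$ is total and single-valued, for every Borel $B\subseteq Y$ both $\varphi^{-1}(B)=\{C\colon\exists y\,(y\in B\wedge(C,y)\in R)\}$ and its complement $\{C\colon\exists y\,(y\notin B\wedge(C,y)\in R)\}$ are projections of analytic sets, hence analytic; so $\varphi^{-1}(B)$ is analytic with analytic complement and therefore Borel. Thus $\varphi$ is Borel measurable, it is a reduction of $H\restriction T$ to equality on the Polish space $Y$, and so $H\restriction T$ is smooth.

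I expect the only genuine obstacle to be the measurability of the fibre map $\Phi$; it is exactly here that perfectness of $\pi$ is used, through the fact that a closed map sends $F_\sigma$ sets to $F_\sigma$ sets. Analyticity of $H$ and the passage from an analytic graph of a total single-valued map to Borel measurability are routine facts of descriptive set theory that I would simply cite.
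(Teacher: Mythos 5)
Your proof is correct, and its overall architecture matches the paper's: send each $C\in T$ to the unique witness provided by Definition \ref{polishable}, note that uniqueness makes this map a reduction of $H\restriction T$ to equality, show the graph is analytic using analyticity of $H$ on $\mathcal K(Q)$, and conclude Borel measurability from the standard fact that a function between Polish spaces with analytic graph is Borel. Where you genuinely diverge is in the key technical lemma and in the target of the reduction. The paper reduces to equality on the set $R=\{\pi^{-1}(y)\colon y\in Y\}$ of fibres viewed inside $\mathcal K(Q)$; to make $R$ a legitimate target it must be a Polish space, and for this the paper invokes \cite[Proposition 3.16]{BartosBMPV}, which says that the fibre set of a closed map is $G_\delta$ in the hyperspace $\mathcal K(X)$; analyticity of the graph then comes from writing it as $H\cap(T\times R)$. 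You instead reduce to equality on $Y$ itself, which is Polish by hypothesis, so you never need the fibre set to be Polish; what you need in its place is Borel measurability of the fibre map $\Phi\colon y\mapsto\pi^{-1}(y)$, which you prove elementarily: closedness of $\pi$ makes the $\Phi$-preimages of the subbasic Vietoris sets open (for the containment sets) and $F_\sigma$ (for the hitting sets), and then the graph of $\varphi$ is the preimage of the analytic set $H$ under a Borel map. This is a real simplification in terms of self-containment --- the appeal to the external $G_\delta$ result disappears, replaced by a two-line argument --- while the paper's route yields the extra information that the fibres form a Polish subspace of the hyperspace, which is of independent interest. Two small points of hygiene in your write-up: to pass from ``preimages of subbasic sets are Borel'' to ``$\Phi$ is Borel'' you should note that $\mathcal K(Q)$ is second countable, so that every open set is a countable union of finite intersections of subbasic sets drawn from a countable family; and your final paragraph essentially reproves \cite[Theorem 14.12]{Kechris} via Souslin's theorem, which is fine but could simply be cited, as the paper does.
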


\begin{proof}
Since $\mathcal C$ is uniquely Polishable, there are Polish spaces $X, Y$ and a continuous mapping $\pi\colon X\to Y$ as in Definition \ref{polishable}. Moreover we can suppose that $X\subseteq Q$, whence $\mathcal K(X)\subseteq\mathcal K(Q)$. Let us denote by $\rho\colon Y\to \mathcal K(X)$ the mapping defined as $\rho(y)=\pi^{-1}(y)$.
Let $R=\{\pi^{-1}(y)\colon y\in Y\}=\rho(Y)$.
Since $\pi$ is closed it follows by \cite[Proposition 3.16]{BartosBMPV}, 
that the set $R$ is a $G_\delta$-set in the hyperspace $\mathcal K(X)$.
Since moreover $\mathcal K(X)$ is a Polish space, it follows that $R$ is a Polish space and also a $G_\delta$-set in $\mathcal K(Q)$ by \cite[Theorem 3.11]{Kechris}.
The equivalence relation $H$ is an analytic set in $\mathcal K(Q)^2$ \cite[Proposition 14.4.3]{Gao} and thus the binary relation $H\cap (T\times R)$ is analytic since $T\times R$ is $G_\delta$ in $\mathcal K(Q)^2$.
Moreover all the vertical sections of $H\cap (T\times R)$ are singletons.
Thus $H\cap (T\times R)$ is a graph of a mapping $\varphi\colon T\to R$. 
Since by \cite[Theorem 14.12]{Kechris} every mapping between Polish spaces with an analytic graph is Borel measurable, it follows that the mapping $\varphi$ is Borel measurable.
Clearly, if $A,B\in T$ then $A$ is homeomorphic to $B$ if and only if $\varphi(A)=\varphi(B)$.
Thus $H\restriction T$ is Borel reducible to the equality on the Polish space $R$,
hence $H\restriction T$ is smooth.
\end{proof}

Let us recall that \emph{a spiral over a continuum $K$} is any space homeomorphic to a metrizable compactification of $(0,1]$ with $K$ as the remainder.
Let us denote by $\mathcal S(K)$ the collection of all spirals over $K$ which are contained in $\mathcal K(Q)$. A mapping $f\colon (0,1]\to K$ is called \emph{compactifying} if $\bigcap_{\varepsilon>0}\overline{f(0,\varepsilon)}=K$. Note that in this case $S_f:=\text{graph}(f)\cup(\{0\}\times K)$ forms a compactification of $(0,1]$ with remainder $\{0\}\times K$.
Let us denote by $\mathcal H(X)$ the homeomorphism group of a topological space $X$.

The following lemma is a modification of both \cite[Theorem 2]{PyrihVejnar} and \cite[Proposition 3.3]{BartosMarcinaPyrihVejnar}. Its brief proof is involved for the sake of completeness.

\begin{lemma}\label{homeo}
If $K$ is a non-degenerate continuum and $e, f\colon(0,1]\to K$ are compactifying
then
$S_{e}$ is homeomorphic to $S_{f}$ if and only if there exist $\alpha\in\mathcal H(K)$ and $\beta\in\mathcal H((0,1])$ such that 
\[\lim_{t\to 0^+} |\alpha (e(t))-f(\beta(t))|=0.\]
\end{lemma}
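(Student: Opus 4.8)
The plan is to treat the two implications separately, with the topological heart of the matter lying in the forward implication: showing that any homeomorphism $h\colon S_e\to S_f$ must carry the remainder $\{0\}\times K$ onto the remainder $\{0\}\times K$.

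For the converse (easier) direction, suppose $\alpha\in\mathcal H(K)$ and $\beta\in\mathcal H((0,1])$ satisfy the limit condition. I would define $h\colon S_e\to S_f$ by $h(t,e(t))=(\beta(t),f(\beta(t)))$ for $t\in(0,1]$ and $h(0,x)=(0,\alpha(x))$ for $x\in K$. Since $\alpha$ and $\beta$ are bijections, $h$ is a bijection, and it is manifestly continuous at every ray point. The only delicate point is continuity at the remainder: if $(t_n,e(t_n))\to(0,x)$ then $t_n\to0^+$ and $e(t_n)\to x$, so $\beta(t_n)\to0$ and, using continuity of $\alpha$ together with the hypothesis $|\alpha(e(t_n))-f(\beta(t_n))|\to0$, one obtains $f(\beta(t_n))\to\alpha(x)$; hence $h(t_n,e(t_n))\to(0,\alpha(x))=h(0,x)$. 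As $S_e$ and $S_f$ are compact metrizable, a continuous bijection is automatically a homeomorphism, which finishes this direction.

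For the forward direction, I would first isolate a homeomorphism-invariant description of the ray: a point $p\in S_e$ lies in $\text{graph}(e)$ if and only if it has an open neighborhood homeomorphic to an interval (open or half-open). Every ray point admits such a neighborhood, because $\text{graph}(e)$ is open in $S_e$ and is locally an arc (the first-coordinate projection is a homeomorphism of the graph onto $(0,1]$, and near a ray point only nearby ray points occur). Conversely, a remainder point $(0,x)$ admits no such neighborhood: since $e$ is compactifying, the ray accumulates onto every neighborhood of $(0,x)$, so each neighborhood meets $\text{graph}(e)$ in infinitely many arcs accumulating onto a nondegenerate piece of the continuum $K$, which is incompatible with a local arc structure. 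Establishing this incompatibility cleanly for an arbitrary, possibly non-locally-connected, nondegenerate $K$ is the main obstacle, and here I would adapt the technique of \cite{PyrihVejnar} and \cite{BartosMarcinaPyrihVejnar}.

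Granting this, $h$ maps $\text{graph}(e)$ onto $\text{graph}(f)$ and $\{0\}\times K$ onto $\{0\}\times K$. Restricting $h$ to the remainder yields $\alpha\in\mathcal H(K)$ via $h(0,x)=(0,\alpha(x))$; composing $h|_{\text{graph}(e)}$ with the coordinate parametrizations of the two rays yields $\beta\in\mathcal H((0,1])$ with $h(t,e(t))=(\beta(t),f(\beta(t)))$. It then remains to verify the limit, which I would do by contradiction: if $|\alpha(e(t))-f(\beta(t))|\not\to0$, choose $t_n\to0^+$ along which this quantity stays bounded below; by compactness of $K$ pass to a subsequence with $e(t_n)\to x$, so $\alpha(e(t_n))\to\alpha(x)$; then $(t_n,e(t_n))\to(0,x)$ forces $h(t_n,e(t_n))=(\beta(t_n),f(\beta(t_n)))\to(0,\alpha(x))$, whence $f(\beta(t_n))\to\alpha(x)$ and the quantity tends to $0$, a contradiction.
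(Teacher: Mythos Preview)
Your argument is correct and structurally matches the paper's proof: the converse direction is handled identically, and in the forward direction you define $\alpha$ and $\beta$ exactly as the paper does once you know the remainder maps to the remainder. The only real difference lies in how you justify that a homeomorphism $h\colon S_e\to S_f$ must send $\text{graph}(e)$ onto $\text{graph}(f)$. You propose a \emph{local} invariant---ray points are exactly those with an interval neighborhood---and flag its verification for general nondegenerate $K$ as the main obstacle. The paper instead uses a one-line \emph{global} invariant: $\text{graph}(g)$ is the unique dense arc-component of $S_g$ (denseness is immediate; and an arc from a ray point to a remainder point would have to traverse all of $\text{graph}(g)$, so its closure would be all of $S_g$, which is impossible for an arc since $K$ is nondegenerate). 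This dispatches the invariance of the ray with no further work and sidesteps your obstacle entirely. Indeed, your local criterion follows from it: an interval neighborhood of a remainder point would be arcwise connected and would contain ray points (by density), placing a remainder point in the ray's arc-component---a contradiction.
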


\begin{proof}
The graph of $g$ is clearly the only dense arc-component of $S_g$.
Hence if $h\colon S_e\to S_f$ is a homeomorphism it follows that 
$h(\text{graph}(e))=\text{graph}(f)$ and thus $h(\{0\}\times K)=\{0\}\times K$.
Let $\alpha(x)=\pi_2(h(0,x))$ and
let $\beta(x)=\pi_1(h(x,e(x)))$.
It is straightforward to verify that $\alpha$ and $\beta$ are homeomorphisms satisfying the limit condition.

For the converse implication one can set $h(0,x)=(0,\alpha(x))$, $x\in K$, and $h(t, e(t))=(\beta(t), f(\beta(t)))$, $t\in(0,1]$. It is a routine to verify that $h\colon S_e\to S_f$ is a homeomorphism.
\end{proof}

Let us denote $2=\{0,1\}$ and let $E_0$ be the equivalence relation of eventual equality of sequence in $2^\N$, that is $(x,y)\in E_0$ if and only if $x,y\in 2^\N$ and there exists $m\in\N$ such that $x_n=y_n$ for $n\geq m$.

Let $f\colon J\to\mathbb R$ be a continuous mapping of an interval. We say that a point $p\in J$ is \emph{a peak point for $f$ of height $h>0$} if there is an interval $[a,b]\subseteq J$ containing $p$ in the interior with the property that $f$ is increasing on $[a,p]$, $f$ is decreasing of $[p,b]$ and $\max\{f(a),f(b)\}=f(p)-h$. We say that $P=\{p_1<\dots<p_n\}$ are \emph{consecutive peak points for $f$} if each $p_i$, $i\leq n$, is a peak point for $f$ and there is no peak point for $f$ in $[p_1,p_n]$ except those in the set $P$.
The following proposition contains the second ingredient to the solution of Minc' question.

\begin{proposition}\label{Enula}
The equivalence relation $E_0$ is Borel reducible to $H\restriction T$ for some Polish space $T\subseteq\mathcal S([0,1])$.
\end{proposition}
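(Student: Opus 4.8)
The plan is to produce a continuous injection $\varphi\colon 2^\N\to\mathcal K(Q)$ of the form $x\mapsto S_{f_x}$, where each $f_x\colon(0,1]\to[0,1]$ is a compactifying map, and to set $T=\varphi(2^\N)$. Since $2^\N$ is compact and $\varphi$ will be continuous, $T$ is a compact, hence Polish, subspace of $\mathcal S([0,1])$, and $\varphi$ is a homeomorphism onto $T$; as $\varphi$ is injective (the graph of $f_x$ recovers $x$), it suffices to arrange that $x\mathrel{E_0}y$ if and only if $S_{f_x}\cong S_{f_y}$. By Lemma \ref{homeo}, applied to the non-degenerate continuum $K=[0,1]$, the latter is equivalent to the existence of $\alpha\in\mathcal H([0,1])$ and $\beta\in\mathcal H((0,1])$ with $\lim_{t\to0^+}|\alpha(f_x(t))-f_y(\beta(t))|=0$, so everything reduces to understanding which compactifying maps are related by such an $(\alpha,\beta)$.

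I would define $f_x$ blockwise on a partition $\{(a_{n+1},a_n]\}_{n\ge1}$ of $(0,1]$ with $a_1=1$ and $a_n\downarrow0$, so that larger indices lie closer to $0$, and let the $n$-th block depend only on the coordinate $x_n$. On its block $f_x$ first performs a \emph{header} consisting of $n$ full spikes, i.e. excursions $0\to1\to0$ reaching the global maximum $1$ and returning to the baseline $0$, and then a \emph{payload} of medium spikes, i.e. excursions $0\to\tfrac12\to0$ reaching only the value $\tfrac12$, the number of which is $1$ if $x_n=0$ and $2$ if $x_n=1$. Since each block contains a full spike, $\overline{f_x(0,\varepsilon)}=[0,1]$ for every $\varepsilon>0$, so $f_x$ is compactifying. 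Continuity of $x\mapsto S_{f_x}$ holds because $x_1,\dots,x_N$ determine the blocks away from $0$, while near $0$ the graph fills the strip $[0,\delta]\times[0,1]$ up to small Hausdorff error independently of $x$ (taking spike widths to vanish). The easy direction is then immediate: if $x_n=y_n$ for all $n\ge m$ then $f_x=f_y$ on $(0,a_m]$, and $\alpha=\mathrm{id}_{[0,1]}$, $\beta=\mathrm{id}_{(0,1]}$ witness the limit condition, so $S_{f_x}\cong S_{f_y}$.

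The substance is the converse: if some $\alpha,\beta$ realize the limit condition then $x\mathrel{E_0}y$. The key is that an asymptotic match transports the combinatorial pattern of local extrema near $0$, because $\alpha$ fixes $\{0,1\}$ setwise. If $\alpha$ is increasing, then $\alpha(0)=0$, $\alpha(1)=1$, $\alpha(\tfrac12)=c\in(0,1)$; at a local maximum of $f_x$ the function $\alpha\circ f_x$ turns downward, forcing $f_y\circ\beta$ (hence $f_y$, via the monotone $\beta$) to have a local maximum at the matched point with value within $\varepsilon$ of the value of $\alpha\circ f_x$. Using the uniform gap between the three levels $0,\tfrac12,1$ and the peak-point and consecutive-peak-point bookkeeping introduced above, for $\varepsilon$ small this upgrades to: full peaks correspond to full peaks, medium peaks to medium peaks, and valleys to valleys, in domain order and for all extrema close enough to $0$. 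This gives an order-preserving matching of the extremal-type sequences; since the lengths of the maximal runs of consecutive full peaks are $1,2,3,\dots$ for both functions, the matching admits no shift, so it pairs the $n$-th header of $f_x$ with the $n$-th header of $f_y$ for all large $n$, whence the intervening medium-peak counts agree and $x_n=y_n$ eventually. The decreasing case is excluded outright: a decreasing $\alpha$ carries the baseline $0$ to $1$, so the local maxima of $\alpha\circ f_x$ arise only from the valleys of $f_x$ and all have value $1$, while $f_y$ retains infinitely many medium peaks near $0$; the forced production of a non-full local maximum of $\alpha\circ f_x$ opposite each medium peak of $f_y$ is then impossible.

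I expect the main obstacle to be precisely this transfer step, namely proving rigorously that mere asymptotic closeness of $\alpha\circ f_x$ and $f_y\circ\beta$, under a genuinely nonlinear vertical homeomorphism $\alpha$ and an arbitrary reparametrization $\beta$, preserves the full-versus-medium peak pattern rather than blurring one level into another. The decisive ingredients will be the fixed uniform separation of the levels $0,\tfrac12,1$ (so a fixed $\alpha$ cannot asymptotically confuse full peaks with medium ones), the rigidity supplied by the strictly increasing header lengths (which is exactly what promotes ``equal tails up to a shift'' to genuine $E_0$), and a careful count of extrema through the notion of consecutive peak points. Granting this, $\varphi$ is a Borel (indeed continuous) reduction of $E_0$ to $H\restriction T$ with $T\subseteq\mathcal S([0,1])$ Polish, as required.
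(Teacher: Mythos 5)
Your proposal is correct, and its skeleton is the same as the paper's: a continuous map $x\mapsto S_{f_x}$ from $2^\N$ into planar spirals built from spikes at finitely many fixed levels, Lemma \ref{homeo} to translate homeomorphism of spirals into an asymptotic match by a pair $(\alpha,\beta)$, and block synchronization via maximal runs of full-height peaks whose lengths $1,2,3,\dots$ are pairwise distinct. Two implementation choices genuinely differ, and both of yours work. First, the paper encodes the bit $a_n$ in the \emph{height} of one designated peak ($f_a(w_n)=1+a_n$, using the four levels $0,1,2,3$), whereas you encode it in the \emph{number} of medium spikes (one or two, using three levels $0,\tfrac12,1$); the paper's choice makes the endgame a one-liner (peak heights are integers, are preserved exactly, and differ by less than $\tfrac12$, hence agree), while yours needs the additional bookkeeping that medium peaks in corresponding gaps are equinumerous --- which your excursion argument does give, since two medium peaks of one function are separated by a return to level $0$, forcing two distinct medium peaks of the other. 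Second, and more substantively, the paper pins $\alpha$ down exactly on all four levels by topological distinguishability of the remainder points $(0,0)$, $(0,1)$, $(0,2)$, $(0,3)$ inside the spiral (the tersest part of its proof), whereas you never need such exact rigidity: a decreasing $\alpha$ dies on your valley contradiction (all local maxima of $\alpha\circ f_x$ would then have value $1$, yet each medium peak of $f_y\circ\beta$ forces an interior local maximum of $\alpha\circ f_x$ of value near $\tfrac12$), and for increasing $\alpha$ the limit condition itself forces $\alpha(\tfrac12)$ to lie within $\varepsilon$ of $\tfrac12$, because the maximum of $f_y\circ\beta$ over a medium excursion of $\alpha\circ f_x$ is attained at a peak of $f_y$, so it equals $\tfrac12$ or $1$ and lies in $[\alpha(\tfrac12)-\varepsilon,\alpha(\tfrac12)+\varepsilon]$. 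This trade --- excursion-endpoint analysis in place of topological distinguishability of remainder points --- is arguably cleaner, since every step reduces to the piecewise-affine structure and the fact that spikes begin and end at level $0$. The one step you must write out with full care is the two-sided peak correspondence (no peak of either composition can hide on a monotone stretch of the other), which is precisely the ``transfer step'' you flagged; it goes through because each excursion of either function has endpoints at $0$ and a unique interior peak, so a hidden peak would force a second local maximum on a unimodal excursion.
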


\begin{proof}
In what follows, we describe a continuous mapping $\psi\colon 2^\N\to \mathcal S([0,1])$ such that $(a,b)\in E_0$ if and only if $\psi(a)$ is homeomorphic to $\psi(b)$. Consequently we can define $T=\psi(2^\N)$ which is clearly a compact set and thus a Polish space.

Let $(c_n)$ be a decreasing sequence in $(0,1]$ with $c_1=1$ and whose limit is 0.
Let $u_n \in (c_{4n-2},c_{4n-3})$, $v_n\in (c_{4n-1},c_{4n-2})$ and $w_n\in (c_{4n},c_{4n-1})$ are chosen arbitrarily. 
Let $A_n, B_n \subseteq (c_{4n+1},c_{4n})$ be disjoint finite sets with $|A_n|=n$, $|B_n|=n-1$ such that between any two distinct elements of $A_n$ there is a point of $B_n$.

\begin{tikzpicture}[line cap=round,line join=round,>=triangle 45,x=1.0cm,y=1.0cm]
\clip(-0.8,-2.0) rectangle (11.0,4.3);
\draw [line width=0.4pt] (0.,3.)-- (0.,0.);
\draw [line width=0.4pt] (8.,0.)-- (7.5,1.);
\draw [line width=0.4pt] (7.,0.)-- (7.5,1.);
\draw [line width=0.4pt] (7.,0.)-- (6.5,2.);
\draw [line width=0.4pt] (6.5,2.)-- (6.,0.);
\draw [line width=0.4pt,dash pattern=on 2pt off 2pt] (6.,0.)-- (5.5,1.);
\draw [line width=0.4pt,dash pattern=on 2pt off 2pt] (5.5,1.)-- (5.,0.);
\draw [line width=0.4pt,dash pattern=on 2pt off 2pt] (5.,0.)-- (5.5,2.);
\draw [line width=0.4pt,dash pattern=on 2pt off 2pt] (5.5,2.)-- (6.,0.);
\draw [line width=0.4pt] (5.,0.)-- (4.5,3.);
\draw [line width=0.4pt] (4.5,3.)-- (4.,0.);
\draw [line width=0.4pt] (4.,0.)-- (3.5,3.);
\draw [line width=0.4pt] (3.5,3.)-- (3.,0.);
\draw [line width=0.4pt] (3.,0.)-- (2.5,3.);
\draw [line width=0.4pt] (2.5,3.)-- (2.,0.);
\draw [line width=0.4pt,dotted] (2.,0.)-- (1.8,0.6);
\draw [line width=0.4pt,dotted] (8.,0.)-- (8.2,0.4);
\begin{scriptsize}
\draw [fill=black] (0.,3.) circle (0.5pt);
\draw[color=black] (-0.19433545228051693,2.9417540418391974) node {3};
\draw [fill=black] (0.,0.) circle (0.5pt);
\draw[color=black] (-0.15875481464981897,-0.03) node {0};
\draw [fill=black] (10.,0.) circle (0.5pt);
\draw[color=black] (9.995069649210613,-0.55) node {$c_1=1$};
\draw [fill=black] (8.,0.) circle (0.5pt);
\draw[color=black] (7.975868463668505,-0.55) node {$c_{4n-3}$};
\draw [fill=black] (7.,0.) circle (0.5pt);
\draw[color=black] (6.926239653562915,-0.55) node {$c_{4n-2}$};
\draw [fill=black] (6.,0.) circle (0.5pt);
\draw[color=black] (5.938876959311046,-0.55) node {$c_{4n-1}$};
\draw [fill=black] (5.,0.) circle (0.5pt);
\draw[color=black] (4.987094902689876,-0.55) node {$c_{4n}$};
\draw [fill=black] (7.5,1.) circle (0.5pt);
\draw [fill=black] (6.5,2.) circle (0.5pt);
\draw [fill=black] (5.5,1.) circle (0.5pt);
\draw [fill=black] (5.5,2.) circle (0.5pt);
\draw[color=black] (5.725393133526858,2.537024288790008) node {$1+a_n$};
\draw [fill=black] (4.5,3.) circle (0.5pt);
\draw [fill=black] (4.,0.) circle (0.5pt);
\draw [fill=black] (3.5,3.) circle (0.5pt);
\draw[color=black] (3.4571274845698623,3.4888063454111786) node {$A_n$};
\draw [fill=black] (3.,0.) circle (0.5pt);
\draw[color=black] (3.466022643977537,-0.23) node {$B_n$};
\draw [fill=black] (2.5,3.) circle (0.5pt);
\draw [fill=black] (2.,0.) circle (0.5pt);
\draw[color=black] (1.9983213417112453,-0.55) node {$c_{4n+1}$};
\draw [fill=black] (0.5,1.5) circle (0.5pt);
\draw [fill=black] (1.,1.5) circle (0.5pt);
\draw [fill=black] (1.5,1.5) circle (0.5pt);
\draw [fill=black] (8.5,1.5) circle (0.5pt);
\draw [fill=black] (9.,1.5) circle (0.5pt);
\draw [fill=black] (9.5,1.5) circle (0.5pt);
\draw [fill=black] (7.5,0.) circle (0.5pt);
\draw[color=black] (7.53111049328478,-0.23) node {$u_n$};
\draw [fill=black] (6.5,0.) circle (0.5pt);
\draw[color=black] (6.525957480217563,-0.23) node {$v_n$};
\draw [fill=black] (5.5,0.) circle (0.5pt);
\draw[color=black] (5.503014148334996,-0.23) node {$w_n$};
\draw[color=black] (4.5,-1.2) node {Figure 1: The graph of $f_a$};
\end{scriptsize}
\end{tikzpicture}

For every sequence $a\in 2^\N$ we describe a mapping $f_a$ (see Figure 1) which is encoding in some sense the asymptotic behavior of the sequence $a$.
We define $f_a\colon (0,1]\to [0,3]$ in such a way that 
it is continuous, affine on every interval which is disjoint with 
$M:=\bigcup (A_n\cup B_n)\cup \{c_n, u_n, v_n, w_n\colon n\in\N\}$
and $f(c_n)=0$ and $f(u_n)=1$, $f(v_n)=2$ $f(w_n)=1+a_n$, for every $n\in\N$, if $x\in\bigcup B_n$ then $f(x)=0$, if $x\in\bigcup A_n$ then $f(x)=3$.

Let $\psi\colon 2^\N\to\mathcal S([0,1])$ be defined as $\psi(a)=S_{f_a}$.
It can be easily seen that the mapping $\psi\colon 2^\N\to\mathcal K(\R^2)$ is continuous.
It remains to verify that $\psi$ is a reduction.
Clearly if $(a,b)\in E_0$ then $\psi(a)$ is homeomorphic to $\psi(b)$.
On the other hand suppose that $\psi(a)$ is homeomorphic to $\psi(b)$ for some $a,b\in2^\N$. Then by Lemma \ref{homeo} there are homeomorphisms $\alpha\in\mathcal H([0,3])$ and $\beta\in\mathcal H((0,1])$ such that 
$\lim_{t\to0^+}|\alpha(f_a(t))-f_b(\beta(t))|=0$. Note that $\beta$ is clearly increasing.
There exists $\varepsilon\in(0,1)$ such that for $t\in(0,\varepsilon)$ the inequality $|\alpha(f_a(t))-f_b(\beta(t))|<1/4$ holds.

Clearly $\alpha(\{0,3\})=\{0,3\}$ since $\alpha\in\mathcal H([0,3])$. The point $(0,0)$ has the property that whenever there is a sequence of points $x_n\in S_{f_a}$ converging to $x\in \{0\}\times [0,3]$ then there is a sequence $y_n\in S_{f_a}$ converging to $(0,0)$ such that the points $x_n$ and $y_n$ are in the same arc-component and the sequence of minimal arcs containing both $x_n$ and $y_n$ converges to the minimal arc containing $x$ and $(0,0)$.
The point $(0,3)$ in $S_{f_b}$ does not have the property above and hence $\alpha(0)=0$ and $\alpha(3)=3$. 
Thus also $\alpha$ is increasing.
Since moreover the points $(0,i)$ for $i=1,2$ are topologically distinguishable from all the remaing points in $S_{f_a}$ (resp. in $S_{f_b}$) we get that $\alpha(i)=i$ (consider e.g. the topological property that every sufficiently small neighborhood of $(0,1)$ or $(0,2)$ has a sequence of some components whose limit set is not a neighborhood of $(0,1)$ or $(0,2)$ in its arc component).

Let us note that the functions $f_a, f_b$ are piecewise affine on every compact subinterval of $(0,1]$ and thus the mappings $\alpha\circ f_a$ and $f_b\circ\beta$ are piecewise monotone on every such interval. Moreover all the peak points of $\alpha\circ f_a$ and $f_b\circ\beta$ are contained in the set 
$\bigcup_{n\in\mathbb N} A_n\cup\{u_n, v_n, w_n\}$ and
$\beta(\bigcup_{n\in\mathbb N} A_n\cup\{u_n, v_n, w_n\})$
respectively and they are of heights $1,2$ or $3$.

Let $m\in\N$ be such that $c_{m}<\varepsilon$ and consider any $n\geq m$. 
Note that the set $A_n$ (resp. $\beta(A_n)$) is a unique maximal set of $n$-many consecutive peak points of height $3$ for $\alpha\circ f_a$ (resp. $f_b\circ\beta$).
Since moreover $|\alpha(f_a(t))-f_b(\beta(t))|<1/4$, $t\in(0,\varepsilon)$, it follows that $\beta([c_{4n+1}, c_{4n}])\subseteq [u_{n+1},w_n]$ for every $n\geq m$.
We conclude that the three peak points $u_{n+1}, v_{n+1}, w_{n+1}$ for $\alpha \circ f_a$ contained in $[c_{4n+4}, c_{4n+1}]$ of heights $1, 2,1+a_n$ correspond with the three peak points $\beta(u_{n+1}), \beta(v_{n+1}), \beta(w_{n+1})$ for $f_b\circ\beta$ contained in $[\beta(c_{4n+4}), \beta(c_{4n+1})]$ of heights $1,2,1+b_n$.
Especially the heights need to differ by at most $1/2$. Since the heights are integral it follows that $1+a_n=1+b_n$, hence $a_n=b_n$ for $n\geq m$.
Thus $(a,b)\in E_0$.
\end{proof}

\begin{corollary}\label{spirpoli}
The collection of all spirals $\mathcal S([0,1])$ is not uniquely Polishable.
\end{corollary}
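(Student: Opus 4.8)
The plan is to deduce this corollary by contradiction, simply combining the two main ingredients already established with the classical fact that $E_0$ is not smooth. So suppose, toward a contradiction, that the collection $\mathcal S([0,1])$ is uniquely Polishable. First I would note that the hypotheses of Theorem \ref{uniquePolIsSmooth} are met: by its very definition $\mathcal S([0,1])$ consists of spirals contained in $\mathcal K(Q)$, so $\mathcal S([0,1])\subseteq\mathcal K(Q)$, and our contradictory assumption says it is uniquely Polishable. Hence Theorem \ref{uniquePolIsSmooth} applies to every Polish subspace of $\mathcal S([0,1])$.

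Next I would invoke Proposition \ref{Enula} to produce a Polish space $T\subseteq\mathcal S([0,1])$ for which $E_0$ is Borel reducible to $H\restriction T$. Applying Theorem \ref{uniquePolIsSmooth} to this particular $T$ yields that $H\restriction T$ is smooth, i.e. Borel reducible to the equality relation on some Polish space. The crucial point where the two ingredients meet is precisely that the Polish space $T$ furnished by the Proposition is eligible as input to the Theorem; this is immediate since $T\subseteq\mathcal S([0,1])\subseteq\mathcal C$ and $T$ is Polish.

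Now I would use the transitivity of Borel reducibility: the composition of the Borel reduction of $E_0$ to $H\restriction T$ with the Borel reduction witnessing the smoothness of $H\restriction T$ is again Borel (being a composition of Borel measurable maps), and it witnesses that $E_0$ is Borel reducible to equality on a Polish space. In other words, $E_0$ would itself be smooth. This, however, contradicts the classical fact from invariant descriptive set theory that $E_0$ is \emph{not} smooth (see \cite{Gao}). The contradiction shows that $\mathcal S([0,1])$ cannot be uniquely Polishable.

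I do not expect any genuine obstacle here, since all the analytic work is packaged into Theorem \ref{uniquePolIsSmooth} and Proposition \ref{Enula}. The only things to verify are the bookkeeping of the inclusions (so that the same $T$ is used in both results) and the elementary transitivity of $\leq_B$; the real content of the argument is the non-smoothness of $E_0$, which serves as the fixed benchmark of complexity that the smoothness conclusion cannot accommodate.
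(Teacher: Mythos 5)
Your proposal is correct and follows exactly the same route as the paper's own proof: assume unique Polishability, apply Theorem \ref{uniquePolIsSmooth} to the Polish space $T$ produced by Proposition \ref{Enula}, and use transitivity of Borel reducibility to contradict the non-smoothness of $E_0$. The only difference is that you spell out a few routine checks (the inclusion $\mathcal S([0,1])\subseteq\mathcal K(Q)$ and that composing Borel reductions is Borel) that the paper leaves implicit.
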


\begin{proof}
Suppose for contradiction that there is such a space $X$.
Then by Theorem \ref{uniquePolIsSmooth} the equivalence relation $H\restriction T$ is smooth for every Polish space $T\subseteq\mathcal S([0,1])$.
By Proposition \ref{Enula} the equivalence relation $E_0$ is Borel reducible to $H\restriction T$ for some Polish space $T\subseteq\mathcal S([0,1])$. By transitivity of Borel reducibility this implies that the relation $E_0$ is smooth.
This is a contradiction with the well known fact that the equivalence relation $E_0$ is not smooth \cite[Proposition 6.1.7]{Gao}.
\end{proof}

We can now easily prove the statement in the title.

\begin{corollary}
The collection of all continua is not uniquely compactifiable i.e. there is no compact metrizable space $X$ such that every continuum is homeomorphic to exactly one component of $X$.
\end{corollary}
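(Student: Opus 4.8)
The plan is to deduce this corollary from the machinery already assembled, via a short argument by contradiction, so that no new construction is needed. Suppose, toward a contradiction, that the collection of \emph{all} continua is uniquely compactifiable, and let $X$ be a compact metrizable space witnessing this as in Definition \ref{compactifiable}; thus every continuum is homeomorphic to a unique component of $X$.

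First I would record the (elementary but essential) observation that unique compactifiability is inherited by subcollections. Every spiral over $[0,1]$ is homeomorphic to a metrizable compactification of $(0,1]$ with the arc $[0,1]$ as remainder; since $(0,1]$ is connected and dense in such a compactification, the compactification is a compact connected metrizable space, i.e.\ a continuum. Hence each $S\in\mathcal S([0,1])$ is among the continua realised by $X$, and the very same space $X$ witnesses that the subcollection $\mathcal S([0,1])$ is uniquely compactifiable: for each $S\in\mathcal S([0,1])$ there is a unique component of $X$ homeomorphic to $S$, which is exactly the requirement of Definition \ref{compactifiable}.

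Next I would invoke Lemma \ref{comppoli}. Since $\mathcal S([0,1])$ is a collection of continua that is uniquely compactifiable, the lemma yields that $\mathcal S([0,1])$ is uniquely Polishable. This, however, contradicts Corollary \ref{spirpoli}, which asserts precisely that $\mathcal S([0,1])$ is \emph{not} uniquely Polishable. The contradiction completes the proof, and the statement of the title then follows since unique compactifiability of the collection of all continua is exactly the existence of a compact metrizable $X$ in which every continuum appears as exactly one component.

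I do not anticipate any genuine obstacle at this stage: all the substantive content lives in the earlier results, ultimately in Proposition \ref{Enula} together with the non-smoothness of $E_0$, funnelled through Theorem \ref{uniquePolIsSmooth}. The only point deserving a moment's care is the heredity step, and it is immediate once one notes that spirals over $[0,1]$ are themselves continua, so that the hypothesised witness for all continua automatically witnesses unique compactifiability for the subfamily $\mathcal S([0,1])$ without any modification.
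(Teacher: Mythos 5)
Your proof is correct and follows essentially the same route as the paper: a contradiction argument funnelled through Lemma \ref{comppoli} and Corollary \ref{spirpoli}. The only (immaterial) difference is the order of steps — you restrict to the subcollection $\mathcal S([0,1])$ first and then apply Lemma \ref{comppoli}, while the paper applies the lemma to all continua and then restricts unique Polishability to the spirals; your explicit check that spirals over $[0,1]$ are continua is a worthwhile detail the paper leaves implicit.
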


\begin{proof}
Suppose for contradiction that the collection $\mathcal C$ of all continua is uniquely compactifiable. Then by Lemma \ref{comppoli} it follows that $\mathcal C$ is uniquely Polishable.
Hence the collection of all spirals $\mathcal S([0,1])$ is uniquely Polishable. This is a contradiction with Corollary \ref{spirpoli}.
\end{proof}

We claim without a detailed proof that by similar reasons as given in the proofs above, the class $\mathcal S(K)$ is not uniquely Polishable, for example if $K$ is a simple closed curve.
Also the class of all \emph{dendrites} (see \cite[Chapter X]{Nadler} for the definition) can not be uniquely Polishable because the homeomorphism relation on dendrites is not smooth \cite{CDM} (see also \cite{KrupskiVejnar} for more details on the complexity of homeomorphism relation on compacta).
On the other hand we do not know the answer to the following question.

\begin{question}
Let $K$ be a non-degenerate continuum. Is it always true that $E_0$ is Borel reducible to $H\restriction T$ for some Polish space $T\subseteq\mathcal S(K)$? How is it in the case when $K$ is the pseudo-arc?
\end{question}

I am grateful to Adam Barto\v{s} and Piotr Minc for their comments to this paper.

\bibliographystyle{alpha}
\bibliography{citace}
\end{document}